\newcommand{\inv}{{\operatorname{inv}}}
\newcommand{\Gal}{\operatorname{Gal}}
\newcommand{\Hom}{\operatorname{Hom}}
\newcommand{\Ext}{\operatorname{Ext}}
\newcommand{\Sh}{\operatorname{Sh}}
\newcommand{\un}{\operatorname{un}}
\newcommand{\Z}{\mathbb{Z}}
\newcommand{\Q}{\mathbb{Q}}
\newcommand{\R}{\mathbb{R}}
\newcommand{\Spec}{\operatorname{Spec}}
\newtheorem{theorem}{Theorem}[section]
\newtheorem{lemma}[theorem]{Lemma}
\newtheorem{remark}[theorem]{Remark}
\newtheorem{definition}[theorem]{Definition}
\newtheorem*{theorem*}{Theorem}
\newcommand{\changeurlcolor}[1]{\hypersetup{urlcolor=#1}} 
\newcommand{\thickhline}{%
    \noalign {\ifnum 0=`}\fi \hrule height 1pt
    \futurelet \reserved@a \@xhline
}
\newcolumntype{"}{@{\hskip\tabcolsep\vrule width 1pt\hskip\tabcolsep}}
\title{\large{\textbf{ARITHMETIC CHERN-SIMONS THEORY FOR ARITHMETIC SCHEMES}}}
\author{\normalsize{JUNGIN LEE}}
\date{}
\newcommand\shorttitle{ARITHMETIC CHERN-SIMONS THEORY FOR ARITHMETIC SCHEMES}
\newcommand\authors{JUNGIN LEE}
\ifodd\value{page}
\authors
\shorttitle
\begin{document}
\maketitle

\vspace{-10mm}

\begin{abstract}
In this paper, we generalize the arithmetic Chern-Simons theory to regular flat separated schemes of finite type over rings of integers of number fields by applying the duality theorems for arithmetic schemes. 
\end{abstract}

\vspace{2mm}



\section{Introduction} \label{Sec1}

In \cite{ACST1}, Minhyong Kim introduced the arithmetic Chern-Simons theory by applying the Dijkgraaf-Witten theory \cite{DW} for 3-manifolds to the spectrum of rings of integers of totally imaginary number fields based on the analogy of arithmetic topology. 
In a subsequent paper \cite{ACST2} by H.-J. Chung, D. Kim, M. Kim, J. Park and H. Yoo, some explicit computations of the arithmetic Chern-Simons invariant were provided via the decomposition formula.

More general non-vanishing examples of the arithmetic Chern-Simons invariant were studied by F. Bleher, T. Chinburg, R. Greenberg, M. Kakde, G. Pappas and M. Taylor in \cite{BCG}. 
In the author's joint work with J. Park \cite{LP}, the construction and explicit computations of the arithmetic Chern-Simons action in \cite{ACST2} were generalized to arbitrary number fields by the use of cohomology with compact support. Also the non-triviality result of \cite{BCG} was generalized to the large family of non-abelian gauge groups by a simple twisting argument.

Recently, T. Geisser and A. Schmidt \cite{GS} generalized Poitou-Tate duality, Poitou-Tate exact sequence and Artin-Verdier duality to regular, flat, separated schemes of finite type over rings of integers of global fields, which were previously known only for smooth schemes. These results are based on the duality theorems via Bloch's cycle complex in \cite{GEI}. Since the constructions in previous papers rely on Poitou-Tate exact sequence and Artin-Verdier duality, it is natural to ask whether the arithmetic Chern-Simons theory can be generalized to arithmetic schemes.

We give an outline of the paper. 
In Section \ref{Sec2} we set up the basic notations and provide duality theorems that we will use in the remainder of this paper. 
We define arithmetic Chern-Simons action for arithmetic schemes without boundary in Section \ref{Sub32}, based on the cohomological results in Section \ref{Sub31}. 
Section \ref{Sub33} and \ref{Sub34} are devoted to the following decomposition formula and its proof. 
\begin{theorem*}
(The decomposition formula) Let $F$ be a number field, $n \geq 2$ be an integer and $X$ be a regular, flat, separated scheme of finite type of relative dimension $r$ over $B = \Spec \mathcal{O}_F$ such that $\mu_{n, X}(X) \cong \Z/n\Z$. For any finite set $T_f$ of finite primes of $F$ containing all primes dividing $n$,
$$
CS(\rho) = \sum_{v \in T_f} \inv_v([b_{-,v}-b_{+,v}])
$$
for $\rho \in H^{2r+3}(\pi^{\un}, \mathscr{M})$. (See Section \ref{Sec2} and \ref{Sec3} for notations.)
\end{theorem*}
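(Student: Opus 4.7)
The plan is to adapt the decomposition formula of \cite{ACST2} and \cite{LP} to the higher-dimensional setting by replacing the Artin--Verdier duality for $\Spec \mathcal{O}_F$ with the Geisser--Schmidt duality for arithmetic schemes (recalled in Section \ref{Sec2}). Since $X$ has absolute dimension $r+1$, the target of the trace map is $H^{2r+3}$, which explains the cohomological degree in the statement. At a high level, $CS(\rho)$ is the evaluation of a class in $H^{2r+3}(\pi^{\un},\mathscr{M})$ against the fundamental class via a global invariant, while the right-hand side is the sum of local invariants attached to the primes where ramification is allowed.

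First, I would fix a cocycle representative $c \in Z^{2r+3}(\pi^{\un},\mathscr{M})$ of $\rho$ and pull it back along the canonical surjection from the fundamental group of $U = X \setminus T_f$ (viewed through the appropriate Galois-theoretic avatar) onto $\pi^{\un}$. Under Geisser--Schmidt/Artin--Verdier duality, the resulting class corresponds to a class in compactly supported cohomology, and $CS(\rho)$ is computed by applying the global invariant map. The cochains $b_{\pm,v}$ are local representatives of the transgressed cocycle at each $v \in T_f$ coming from the two natural models of the local cohomology (inertia-fixed versus full local), and their difference $b_{-,v}-b_{+,v}$ descends to a local cocycle on the decomposition group at $v$ to which $\inv_v$ can be applied.

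Next, I would run the localization long exact sequence
\[
\cdots \longrightarrow H^{2r+3}_c(U,\mathscr{M}) \longrightarrow H^{2r+3}(U,\mathscr{M}) \longrightarrow \bigoplus_{v \in T_f} H^{2r+3}(X_v,\mathscr{M}) \longrightarrow \cdots,
\]
which by Section \ref{Sub31} is available in this arithmetic-scheme setting via \cite{GS}. Tracking the chosen cochain through the connecting map realizes the global class in $H^{2r+3}_c$ as the sum of local contributions coming from $T_f$. Applying the global invariant and using the compatibility of global and local invariants (that is, the analog of the reciprocity statement built into the Poitou--Tate sequence of Geisser--Schmidt) rewrites $CS(\rho)$ as $\sum_{v \in T_f} \inv_v([b_{-,v}-b_{+,v}])$.

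The main obstacle will be the bookkeeping around choices of cochains rather than any deep new ingredient: one must verify that the local cocycles $b_{-,v}-b_{+,v}$ are independent of the auxiliary representative $c$ up to coboundaries (so that $\inv_v$ is well defined), and that the boundary map in the localization sequence genuinely realizes them as the images of the chosen global cochain. A secondary technical point is that the Geisser--Schmidt duality at the residue-field-of-characteristic-$p$ primes dividing $n$ behaves correctly for the sheaf $\mathscr{M}$ (a twist of $\mu_n^{\otimes (r+1)}$), which is why $T_f$ is required to contain all primes dividing $n$; once this is in place, the rest of the argument is parallel to the one-dimensional case in \cite{LP}.
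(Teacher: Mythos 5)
Your overall strategy---trivialize the global cocycle after allowing ramification at $T_f$, trivialize it locally in the unramified quotient, and compare the two through the compact-support/Poitou--Tate machinery---is the same route the paper takes, but as written it has genuine gaps. The most serious one is the origin of $b_{+,v}$: you describe $b_{+,v}$ and $b_{-,v}$ as two local models of one transgressed class (``inertia-fixed versus full local''), but $b_{+,v}$ is not a local object. It is the localization at $v$ of a single global cochain $b_+ \in C^{2r+2}(\pi_T,\mathscr{M}_T)$ induced from a cochain $b'_+$ on $\pi_{T_f}$ with $\kappa_{T_f}^*(\omega)=db'_+$, and the existence of $b'_+$ is exactly the vanishing $H^{2r+3}(\pi_{T_f},\mathscr{M}_{T_f})=0$. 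That vanishing is a real theorem, obtained by combining $R^qf_{T!}\Z/n\Z=0$ for $q>2r$ (the relative-dimension-$r$ bound) with $H^3(\pi_{T_f},A)=0$ for finite $n$-torsion modules; this is where the hypothesis $T_f\supseteq T_{0f}$ enters on the Galois side, not merely in making the Geisser--Schmidt duality applicable. Likewise the existence of $\widetilde{b_{-,v}}$ with $\kappa_v^*(\omega)\mid_{\pi_v/I_v}=d\widetilde{b_{-,v}}$ requires $H^{2r+2}(\pi_v/I_v,\mathscr{N}_{T,v})=H^{2r+3}(\pi_v/I_v,\mathscr{N}_{T,v})=0$. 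Without these two vanishing statements the cochains $b_{\pm,v}$, and hence the right-hand side of the formula, are not even defined; calling this ``bookkeeping'' understates that it is the core content.

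Second, the exact sequence you propose to run is the wrong portion, and the compatibilities that actually yield the formula are not addressed. What is needed is the tail of the mapping-cone sequence $H^{2r+2}(\pi_T,\mathscr{M}_T)\to\prod_{v\in T}H^{2r+2}(\pi_v,\mathscr{M}_{T,v})\to H^{2r+3}_c(\pi_T,\mathscr{M}_T)\to 0$, whose right-exactness again uses $H^q(\mathscr{M}_T)=0$ for $q>2r$ together with $H^i(\pi_T,A)\cong\prod_{v\in T}H^i(\pi_v,A)$ for $i\ge 3$, rather than the degree-$(2r+3)$ localization sequence you display. One must then (i) identify $\prod_{v\in T}H^{2r+2}(\pi_v,\mathscr{M}_{T,v})$ with $P_c^{2r+2}(X_T,\Z/n\Z)$ via $\mathscr{M}_{T,v}=\alpha_{v*}Rf_{v!}\Z/n\Z$, (ii) check that this Galois-cohomological sequence maps compatibly to the \'etale Poitou--Tate sequence, so that the composite $\prod_{v\in T}H^{2r+2}(\pi_v,\mathscr{M}_{T,v})\to H^{2r+3}_c(\pi_T,\mathscr{M}_T)\xrightarrow{\inv'_T}\frac{1}{n}\Z/\Z$ equals $\sum_{v\in T}\inv_v$, and (iii) check that $CS$, which is defined through $\widehat{H}^{2r+3}_c(X,\Z/n\Z)\to\widehat{H}^{2r+3}_c(X_{T_0},\Z/n\Z)$ and $\inv_{T_0}$, agrees with $\inv'_T$ applied to the image of $\rho$ in $H^{2r+3}_c(\pi_T,\mathscr{M}_T)$; this uses the isomorphism $\beta^{\text{\'et}}_{T_0,T}$ and the commutative diagram connecting the unramified, $S_{\infty}$ and $T$ levels, none of which appears in your outline. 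Once these points and the vanishing statements are in place, the cocycle manipulation $[(\kappa_T^*(\omega),(b_{-,v})_{v\in T})]=[(0,(b_{-,v}-b_{+,v})_{v\in T})]$ finishes the proof in the way you intend.
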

Even though we do not provide explicit computations of the arithmetic Chern-Simons action in this paper, we expect that this will lead to interesting results for arithmetic schemes.


\section{Preliminaries} \label{Sec2}


\subsection{Definitions and notations} \label{Sub21}

\noindent Let $F$ be a number field and $n \geq 2$ be an integer. Let $S_{\infty}$ be the set of all real places of $F$, $S_f$ be any finite set of finite places of $F$ and $S = S_f \cup S_{\infty}$. 
In particular, denote $T_{0f}$ be the set of places of $F$ above $n$, $T_f$ be a finite set of places of $F$ containing $T_{0f}$, $T_0=T_{0f} \cup S_{\infty}$ and $T=T_f \cup S_{\infty}$.
Note that since there are no ramifications at complex places of $F$, it does not affect the theory if we include complex places in $S_{\infty}$ as in \cite{CM} or \cite{GS}.

Denote the base scheme $\Spec \mathcal{O}_F$ by $B$ and let $B_S := \Spec(\mathcal{O}_F \left [ \frac{1}{S_f} \right ])$. Let $X$ be a regular, flat, separated scheme of finite type of relative dimension $r$ over $B$ with a structure morphism $f : X \rightarrow B$, $X_S :=  X \times_B B_S$ and $f_S$, $g_S$ and $g'_S$ are given as below. Note that $g_S$ and $g_S'$ are open immersions and $f_S$ is a regular, flat, separated morphism of finite type of relative dimension $r$.
\[
\begin{tikzcd}
X_S \arrow[r, "f_S"] \arrow[d, "g'_S"] & B_S \arrow[d, "g_S"]  \\
X \arrow[r, "f"] & B
\end{tikzcd}
\]

Let $\text{G}_F := \Gal(\overline{F}/F)$, $\mathfrak{b} : \Spec \overline{F} \rightarrow B$ be a geometric point and $\pi := \pi_1(B, \mathfrak{b}) \cong \Gal(F^{f}_{\un}/F)$ where $F^f_{\un}$ is the maximal extension of $F$ in $\overline{F}$ unramified at all finite places. Denote $\pi^{\un} := \Gal(F_{\un}/F)$ where $F_{\un}$ is the maximal extension of $F$ in $\overline{F}$ unramified at all places. 
Let $\pi_v := \Gal(\overline{F_v}/F_v)$, $I_v \subset \pi_v$ be the inertia group and $\kappa_v : \pi_v \rightarrow \text{G}_F \rightarrow \pi^{\un}$ be given by choices of embeddings $\overline{F} \rightarrow \overline{F_v}$.

Let $\widetilde{S}$ be any set of places of $F$ satisfying $S_f \subseteq \widetilde{S} \subseteq S$ and $\pi_{\widetilde{S}} := \Gal(F_{\un}^{\widetilde{S}}/F)$ where $F_{\un}^{\widetilde{S}}$ is the maximal unramified extension of $F$ in $\overline{F}$ unramified outside $\widetilde{S}$. 
Then $\pi_S \cong \pi_1(B_S, \mathfrak{b}_S)$ for a geometric point $\mathfrak{b}_S : \Spec \overline{F} \rightarrow B_S$. For $\varpi := \Gal(F_{\un}^f/F_{\un})$ and $\varpi_S := \Gal(F_{\un}^{S}/F_{\un}^{S_f})$, $\pi / \varpi \cong \pi^{\un}$ and $\pi_S/\varpi_{S} \cong \pi_{S_f}$. 
Let $p_S : \pi_S \rightarrow \pi_{S_f}$ and $\kappa_{\widetilde{S}} : \pi_{\widetilde{S}} \rightarrow \pi^{un}$ be natural quotient maps.

For any place $v$ of $F$, denote $B_v := \Spec F_v $ and $X_v := X \times_B B_v = X_S \times_{B_S} B_v$. For an \'etale sheaf $\mathscr{F}$ on $X_S$, its localization on $X_v$ is denoted by $\mathscr{F}_v$. 
For each $v \in \widetilde{S}$, let $i_v = i_{v, \widetilde{S}} : \pi_v \rightarrow \text{G}_F \rightarrow \pi_{\widetilde{S}}$ be given by the embedding $\overline{F} \rightarrow \overline{F_v}$ same as above and $i'_v = i'_{v, S} : B_v \rightarrow B_S$ be the natural map. 
For a $\pi_{\widetilde{S}}$-module $M$, let $M_v$ be $M$ equipped with a $\pi_v$-module structure given by $i_v$.

For an abelian category $\mathscr{A}$, denote the derived category of bounded complexes over $\mathscr{A}$ by $\mathscr{D}^b(\mathscr{A})$. 
Unless otherwise stated, every sheaf or cohomology group is \'etale.
Assume that $\mu_{n, X}(X) \cong \Z/n\Z$, where $\mu_{n, X}$ is the sheaf of $n$-th roots of unity of $X$. For $X=B$, this is equivalent to the condition that $\mu_n(\overline{F}) = \mu_n(F)$. 
Since $g_S'$ is an open immersion, $\mu_{n, X_S}(X_S)=\mu_{n, X}(X) \cong \Z/n\Z$ for any $S$.

A Pontryagin dual of a finite abelian group $G$ is denoted by $G^D$. For a Galois group $G=\Gal(L/K)$, denote the dual group of a $G$-module $M$ by $M^{\vee} := \Hom_G(M, L^{\times})$. For a locally constant abelian \'etale sheaf $\mathscr{F}$ on $X_S$, its dual is denoted by $\mathscr{F}^{\vee} := \underline{\Hom}_{X_S}(\mathscr{F}, \mathbf{G}_m)$. For a sheaf of $\Z/n\Z$-module $\mathscr{F}$ on $X_S$, define $\mathscr{F}^{\vee}(r) := \underline{\Hom}_{X_S}(\mathscr{F}, \mu_{n, X_S}^{\otimes r})$.


\subsection{Duality for arithmetic schemes} \label{Sub22}

In this subsection, we introduce Poitou-Tate exact sequence and local and global duality theorems for arithmetic schemes following the exposition given in \cite{GS}. First we fix some notations about modified and compactly supported \'etale cohomology.

Let $A$ be a ring which is one of $\R$, $\Z$ and a non-archimedean local field and $\mathscr{F}^{\bullet}$ be a bounded complex of sheaves on $\Spec A$. For each ring $A$, define $\widehat{H}^i(\Spec A, \mathscr{F}^{\bullet})$ ($i \in \Z$) as in \cite[p. 2023-2024]{GS}.

\begin{definition} \label{def221}
Let $A$ be a ring as above, $h : \mathscr{X} \rightarrow \Spec A$ be a separated morphism of finite type, $\mathscr{F}^{\bullet}$ be a bounded complex of torsion sheaves on $\mathscr{X}$ and $i \in \Z$. \\
(1) $\widehat{H}^i(\mathscr{X}, \mathscr{F}^{\bullet}) := \widehat{H}^i(\Spec A, Rh_* \mathscr{F}^{\bullet})$ \textup{(modified \'etale cohomology).} \\
(2) $H_c^i(\mathscr{X}, \mathscr{F}^{\bullet}) := H^i(\Spec A, Rh_! \mathscr{F}^{\bullet})$ \textup{(\'etale cohomology with compact support).} \\
(3) $\widehat{H}^i_c(\mathscr{X}, \mathscr{F}^{\bullet}) := \widehat{H}^i(\Spec A, Rh_! \mathscr{F}^{\bullet})$ \textup{(modified \'etale cohomology with compact support).}
\end{definition}

\begin{remark} \label{rmk222}
If $\mathscr{X} = B_S$ and $\mathscr{F}$ is a single sheaf on $B_S$, then the groups $\widehat{H}^i(B_S, \mathscr{F})$ ($i \in \Z$) coincide with the modified \'etale cohomology groups defined in \cite[Definition 3.1.4]{ZINK} and the groups $\widehat{H}_c^i(B_S, \mathscr{F})$ ($i \in \Z$) coincide with the compactly supported cohomology groups defined in \cite[p. 166]{ADT} or \cite[Definition 5.4.1]{CM}. 
See \cite[Section A.3]{LP} for the identification of two definitions of compactly supported cohomology groups. 
\end{remark}

\begin{remark} \label{rmk223}
One should be careful about the notations about cohomology groups. As mentioned above, the groups $\widehat{H}_c^i(B_S, \mathscr{F})$ are same as the compactly supported cohomology groups, which is denoted by $H_c^i(B_S, \mathscr{F})$ in \cite{CM}, \cite{LP} and \cite{ADT}. We use the notation $\widehat{H}_c^i$ here. 
For Galois cohomology groups with compact support, we use the notation $H_c^i$ as usual. 
\end{remark}

There are two versions of Poitou-Tate exact sequence for arithmetic schemes: the original one \cite[Theorem B]{GS} and its dual version with compact support \cite[Theorem C]{GS}. We will only introduce the second version because we will not use the first one.

\begin{theorem} \label{thm223}
\textup{(\cite[Theorem C]{GS}; Poitou-Tate exact sequence with compact support)} Let $\mathscr{F}$ be a locally constant, constructible sheaf of $\Z/n\Z$-modules on $X_T$. \\
(1) There exists a $(6r+9)$-term exact sequence of abelian topological groups and strict homomorphisms
\begin{equation*}
\begin{split}
0 \longrightarrow & H^{0}_c(X_T, \mathscr{F}) \longrightarrow P^{0}_c(X_T, \mathscr{F}) \longrightarrow H^{2r+2}(X_T, \mathscr{F}^{\vee} (r+1))^D \longrightarrow \\
\cdots \longrightarrow & H^{i}_c(X_T, \mathscr{F}) \xrightarrow{\lambda_{i,c}} P^{i}_c(X_T, \mathscr{F}) \longrightarrow H^{2r+2-i}(X_T, \mathscr{F}^{\vee} (r+1))^D \longrightarrow \\
\cdots \longrightarrow & H^{2r+2}_c(X_T, \mathscr{F}) \longrightarrow P^{2r+2}_c(X_T, \mathscr{F}) \longrightarrow H^{0}(X_T, \mathscr{F}^{\vee} (r+1))^D \longrightarrow 0
\end{split}
\end{equation*}
where $\displaystyle P^{i}_c(X_T, \mathscr{F}) := \prod_{v \in T} \widehat{H}^i_c(X_v, \mathscr{F}_v)$. \\
(2) For $i \geq 2r+3$, the localization map
$$
\lambda_{i,c} : H^i_c(X_T, \mathscr{F}) \rightarrow P^i_c(X_T, \mathscr{F}) = \prod_{v \in X_{\infty}} \widehat{H}^i_c(X_v, \mathscr{F}_v)
$$
is an isomorphism. 
\end{theorem}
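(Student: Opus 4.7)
The strategy is to derive this compactly-supported Poitou--Tate sequence by combining three inputs: Artin--Verdier duality for arithmetic schemes (Geisser \cite{GEI}, as formulated in \cite{GS}), local duality at each $v \in T$, and the localization exact triangle on $X_T$. The template parallels the classical derivation of the compact-support Poitou--Tate sequence from Artin--Verdier duality and the non-compact sequence, now with the bounds and degree shifts adapted to relative dimension $r$.

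Artin--Verdier duality identifies (up to the appropriate $\widehat{\,\cdot\,}$ modification at archimedean places)
\begin{equation*}
H^i_c(X_T, \mathscr{F}) \cong H^{2r+3-i}(X_T, \mathscr{F}^\vee(r+1))^D,
\end{equation*}
where the biduality $(\mathscr{F}^\vee(r+1))^\vee(r+1) \cong \mathscr{F}$ uses the hypothesis $\mu_{n,X}(X) \cong \Z/n\Z$ to trivialize $\mu_{n, X_T}^{\otimes (r+1)}$. Local duality at each $v \in T$ gives
\begin{equation*}
\widehat{H}^i_c(X_v, \mathscr{F}_v) \cong H^{2r+2-i}(X_v, \mathscr{F}_v^\vee(r+1))^D,
\end{equation*}
the shift $2r+2$ reflecting the cohomological-dimension bound $2\dim X_v + \operatorname{cd}(F_v) = 2r+2$ at non-archimedean $v$ (with the corresponding Tate-cohomology analog at archimedean $v$). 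The localization exact triangle for $X_T$ then assembles these pairings into a single long exact sequence involving $H^i_c(X_T, \mathscr{F})$, the local product $P^i_c(X_T, \mathscr{F}) = \prod_{v \in T} \widehat{H}^i_c(X_v, \mathscr{F}_v)$, and a further term which, after Artin--Verdier, is identified with $H^{2r+2-i}(X_T, \mathscr{F}^\vee(r+1))^D$. Reindexing produces exactly the $(6r+9)$-term sequence of part (1).

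The main obstacle is bookkeeping: matching exact degree conventions, distinguishing ordinary $H^i_c$ from modified $\widehat{H}^i_c$ (which differ through archimedean contributions; see \cite[Section A.3]{LP}), and verifying the compatibility of the three duality inputs. Once these are aligned (as done in \cite{GS}), the assembly is essentially formal. For part (2), the relevant fact is a cohomological dimension bound: for non-archimedean $v$, the cohomological dimension of $F_v$ is $2$, so the \'etale cohomological dimension of $X_v$ is at most $2r+2$, and hence $\widehat{H}^i_c(X_v, \mathscr{F}_v) = 0$ for every $i \geq 2r+3$ at non-archimedean $v$; the dual global term $H^{2r+2-i}(X_T, \mathscr{F}^\vee(r+1))^D$ also vanishes for $i \geq 2r+3$ since the cohomological degree is negative. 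Extending the exact sequence of part (1) past its last term then forces $\lambda_{i,c}$ to be an isomorphism onto $\prod_{v \in X_{\infty}} \widehat{H}^i_c(X_v, \mathscr{F}_v)$ for $i \geq 2r+3$.
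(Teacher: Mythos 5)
The paper does not actually prove this statement: Theorem \ref{thm223} is quoted directly from Geisser--Schmidt (\cite[Theorem C]{GS}), and the citation is the entirety of the paper's justification. Measured against that, your proposal is a strategy outline of the Geisser--Schmidt argument rather than a proof: the substantive steps are deferred back to the source (``once these are aligned (as done in \cite{GS}), the assembly is essentially formal''), so nothing beyond the citation is actually established. The ingredients you name (generalized Artin--Verdier duality as in Theorem \ref{thm225}, local duality as in Theorem \ref{thm224}, a localization/mapping-fibre sequence relating $H^i_c(X_T,\mathscr{F})$, $H^i(X_T,\mathscr{F})$ and the local terms) are indeed the right ones, and the count $3(2r+3)=6r+9$ and the degree shifts $2r+3$ (global) versus $2r+2$ (local) are consistent with the paper's statements.

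Two concrete problems remain. First, biduality $(\mathscr{F}^\vee(r+1))^\vee(r+1)\cong\mathscr{F}$ is an \'etale-local fact about finite $\Z/n\Z$-modules and the locally constant sheaf $\mu_{n,X_T}^{\otimes(r+1)}$; it does not use, and \cite[Theorem C]{GS} does not assume, $\mu_{n,X}(X)\cong\Z/n\Z$. That standing hypothesis enters this paper only later, to identify $(\mu_{n,X_T}(X_T)^D)^{\otimes(r+1)}$ with $\frac{1}{n}\Z/\Z$, so invoking it here misplaces its role. Second, and more seriously, part (2) does not follow by ``extending the exact sequence of part (1) past its last term'': statement (1) is a finite $(6r+9)$-term sequence terminating in $\rightarrow 0$ at degree $2r+2$, and the existence of an exact continuation in degrees $\geq 2r+3$ is precisely what would have to be proved. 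Your vanishing inputs are correct --- $\widehat{H}^i_c(X_v,\mathscr{F}_v)=0$ for non-archimedean $v$ and $i\geq 2r+3$ because $Rf_{v!}$ has amplitude $\leq 2r$ and $\operatorname{cd}(F_v)=2$, and the would-be third terms sit in negative cohomological degree --- but by themselves they only describe what a continuation would look like; they do not show that $\lambda_{i,c}$ is injective or surjective onto the archimedean product. A genuine argument in high degrees is needed (in \cite{GS} this is a separate step, in effect comparing $H^i_c(X_T,\mathscr{F})$ with the modified groups $\widehat{H}^i_c(X_T,\mathscr{F})$, which are controlled by Theorem \ref{thm225}, and isolating the archimedean discrepancy), and as written your part (2) is circular at exactly this point.
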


Now we provide local and global duality theorems for arithmetic schemes. The theorems are stated in a slightly different manner from \cite{GS}.

\begin{theorem} \label{thm224}
\textup{(Local duality)} Let $v \in T$ and $\mathscr{F}$ be a locally constant, constructible sheaf of $\Z/n\Z$-modules on $X_v$. Then Tate's local duality induces perfect pairing of finite abelian groups
$$
\widehat{H}_c^i (X_v, \mathscr{F}) \times H^{(2r+2)-i}(X_v, \mathscr{F}^{\vee} (r+1))  \rightarrow \Q/\Z
$$
for all $i \in \Z$. 
\end{theorem}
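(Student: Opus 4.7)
The plan is to combine relative Poincar\'e--Verdier duality for the structure morphism $h_v : X_v \to B_v$ with Tate's local duality on $F_v$. By Definition \ref{def221} one has $\widehat{H}^i_c(X_v, \mathscr{F}) = \widehat{H}^i(B_v, Rh_{v,!}\mathscr{F})$, so the goal reduces to establishing a canonical isomorphism
\[
\mathrm{R}\mathcal{H}om_{B_v}(Rh_{v,!}\mathscr{F},\, \mu_{n,B_v}) \;\cong\; Rh_{v,*}\mathscr{F}^{\vee}(r+1)[2r]
\]
in $\mathscr{D}^b(B_v)$; Tate duality on the point $B_v$ will then convert the pairing against $\widehat{H}^i(B_v, Rh_{v,!}\mathscr{F})$ into one against $H^{(2r+2)-i}(X_v, \mathscr{F}^{\vee}(r+1))$.

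The first key step is to verify that $h_v$ is smooth of relative dimension $r$. Since $X$ is regular and flat over $B = \Spec \mathcal{O}_F$, the generic fibre $X_F$ is regular, hence smooth over the characteristic-zero field $F$, and smoothness survives base change to $F_v$ (and, for real $v$, to $\R$). Poincar\'e duality for smooth morphisms then gives $h_v^{!}\mu_{n,B_v} \cong \mu_{n,X_v}^{\otimes (r+1)}[2r]$, and relative Verdier duality
\[
\mathrm{R}\mathcal{H}om_{B_v}(Rh_{v,!}\mathscr{F},\,\mu_{n,B_v}) \;\cong\; Rh_{v,*}\,\mathrm{R}\mathcal{H}om_{X_v}(\mathscr{F},\, h_v^{!}\mu_{n,B_v})
\]
combined with the fact that $\mathscr{F}$ is locally constant constructible of $\Z/n\Z$-modules and $\mu_{n,X_v}^{\otimes(r+1)}$ is \'etale-locally isomorphic to $\Z/n\Z$ (so the inner $\mathrm{R}\mathcal{H}om$ in the category of $\Z/n\Z$-sheaves is concentrated in degree zero and reduces to $\mathscr{F}^{\vee}(r+1)$) yields the desired isomorphism.

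Finally I would apply Tate's local duality at $v$ in its derived form to the bounded complex $N^{\bullet} := Rh_{v,!}\mathscr{F}$ of finite $n$-torsion discrete $\pi_v$-modules, obtaining a perfect pairing
\[
\widehat{H}^i(B_v, N^{\bullet}) \times H^{2-i}\bigl(B_v,\, \mathrm{R}\mathcal{H}om(N^{\bullet}, \mu_{n,B_v})\bigr) \to \Q/\Z.
\]
Substituting the isomorphism from the previous paragraph and using the Leray identification $H^{2-i}(B_v, Rh_{v,*}\mathscr{F}^{\vee}(r+1)[2r]) = H^{(2r+2)-i}(X_v, \mathscr{F}^{\vee}(r+1))$ recovers the stated pairing. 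The main technical difficulty is the archimedean case: when $v \in S_\infty$ one must work throughout with the modified Tate cohomology $\widehat{H}^{\bullet}$ on $B_v = \Spec \R$ and check that the relative Verdier isomorphism is compatible with the archimedean version of Tate's pairing; one should also confirm that the cohomology sheaves of $Rh_{v,!}\mathscr{F}$ are finite as $\pi_v$-modules, so that Tate's pairing takes values in $\Q/\Z$ as claimed.
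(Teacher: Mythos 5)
Your proposal is essentially correct in substance, but it takes a genuinely different route from the paper. The paper does not argue via smooth Poincar\'e--Verdier duality at all: it quotes the Geisser--Schmidt machinery directly, first identifying $H^{(2r+2)-i}(X_v, \mathscr{F}^{\vee}(r+1))$ with $\Ext^{3-i}_{X_v}(\mathscr{F}, \Z^c_{X_v}(-1))$ (Step 2 on p.~2039 of \cite{GS} over the henselization, together with \cite[Proposition 10.1]{GS} to pass to the completion), and then invoking the local duality theorem \cite[Theorem 4.3]{GS}, which is proved via Bloch's cycle complex. Your key observation --- that $X_F$ is regular of finite type over the perfect field $F$, hence smooth, so that each $X_v$ is smooth of relative dimension $r$ over $F_v$ --- is correct and is exactly what makes the classical argument (purity $h_v^!\mu_{n,B_v}\cong\mu_{n,X_v}^{\otimes(r+1)}[2r]$, relative Verdier duality, injectivity of $\Z/n\Z$ over itself to collapse the inner $\mathrm{R}\mathcal{H}om$ to $\mathscr{F}^{\vee}(r+1)$, then Tate duality on $\Spec F_v$ by d\'evissage on the bounded complex $Rh_{v,!}\mathscr{F}$ of finite modules) available here. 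What the cycle-complex route buys is uniformity: \cite{GS} must also handle local fields of positive characteristic, where regular does not imply smooth; over a number field your shortcut is legitimate and more elementary, at the cost of redoing an argument that \cite{GS} packages once and for all in a form that also feeds the global duality (Theorem \ref{thm225}), where smoothness genuinely fails.

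The one place where your sketch is not yet a proof is the archimedean case, which you flag but do not resolve. For real $v$ the correct form of Tate duality pairs Tate-modified groups on both sides, so your step pairing $\widehat{H}^i(B_v, Rh_{v,!}\mathscr{F})$ against \emph{ordinary} $H^{2-i}(B_v, \mathrm{R}\mathcal{H}om(Rh_{v,!}\mathscr{F},\mu_{n,B_v}))$ is not literally perfect for all $i\in\Z$ (take $N^{\bullet}$ a single module and $i$ large: the modified side is periodic and generally nonzero while the ordinary side vanishes). To match the statement as formulated one must either work with the modified groups throughout at real places, following the conventions of \cite{GS}, or add a comparison between modified and ordinary hypercohomology in the relevant range of degrees; the paper sidesteps this entirely by citing \cite[Theorem 4.3]{GS}, whose formulation already incorporates the archimedean modifications. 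So: non-archimedean case complete, archimedean case needs the missing compatibility spelled out.
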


By Theorem \ref{thm224}, for $v \in T$,
$$
\widehat{H}_c^{2r+2}(X_v, \Z/n\Z) \cong H^0(X_v, \Z/n\Z^{\vee}(r+1))^D
$$
$$
= \Hom (\Z/n\Z, \mu_{n, X_v}(X_v)^{\otimes (r+1)})^D = (\mu_{n, X_v}(X_v)^D)^{\otimes (r+1)} \cong \frac{1}{n}\Z/\Z.
$$
Denote the isomorphism $\widehat{H}_c^{2r+2}(X_v, \Z/n\Z) \xrightarrow{\simeq} \frac{1}{n}\Z/\Z$ by $\inv_v$.

\begin{proof}
By Step 2 in \cite[p.2039]{GS} and \cite[Proposition 10.1]{GS}, for all $i \in \Z$, 
$$
H^{(2r+2)-i}(X_v, \mathscr{F}^{\vee} (r+1)) \cong \Ext^{3-i}_{X_v}(\mathscr{F}, \Z^c_{X_v}(-1)).
$$
(Step 2 provides isomorphisms for a scheme $X_{(v)} := X \times_B \Spec F_{(v)}$ over a henselization $F_{(v)}$ of $F$ at $v$ and \cite[Proposition 10.1]{GS} enables us to replace henselization by completion.) Now the theorem follows from \cite[Theorem 4.3]{GS}. 
\end{proof}

\begin{theorem} \label{thm225}
\textup{(Generalized Artin-Verdier duality)} Let $\mathscr{F}$ be a locally constant, constructible sheaf of $\Z/n\Z$-modules on $X_T$. Then Artin-Verdier duality induces perfect pairing of finite abelian groups
$$
\widehat{H}_c^i (X_T, \mathscr{F}) \times H^{(2r+3)-i}(X_T, \mathscr{F}^{\vee} (r+1))  \rightarrow \Q/\Z
$$
for all $i \in \Z$. 
\end{theorem}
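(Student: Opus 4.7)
I would parallel the proof of Theorem \ref{thm224}, substituting the global arithmetic scheme $X_T$ for the local factor $X_v$, and invoking the global Artin-Verdier duality of Geisser-Schmidt in place of their local Theorem 4.3. The two ingredients I need are: (a) a reinterpretation of the ``$H^{\bullet}(X_T, \mathscr{F}^{\vee}(r+1))$'' side of the pairing as an Ext group into Bloch's cycle complex $\Z^c_{X_T}$, and (b) a global duality pairing already given in \cite{GS} in that Ext formulation.

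For (a), since $X_T$ is regular of absolute dimension $r+1$, Bloch's cycle complex $\Z^c_{X_T}$ serves as a dualizing object. Under the Beilinson--Lichtenbaum-type identification on the étale site, the sheaf $\mu_{n, X_T}^{\otimes(r+1)}$ may be compared with $\Z^c_{X_T}(-1)/n$ up to a degree shift, and since $\mathscr{F}$ is $n$-torsion, taking derived $\underline{\Hom}$ of $\mathscr{F}$ into both objects yields an isomorphism of the form
$$H^{(2r+3)-i}(X_T, \mathscr{F}^{\vee}(r+1)) \cong \Ext^{j}_{X_T}\bigl(\mathscr{F}, \Z^c_{X_T}(-1)\bigr)$$
for a $j$ determined by the conventions, exactly in parallel with Step 2 of \cite[p.~2039]{GS} but applied globally rather than over a local henselization. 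The analogue of \cite[Proposition 10.1]{GS} (the henselization-to-completion comparison) is not needed here, since $X_T$ is already the global object.

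For (b), I would then invoke the Artin-Verdier duality for arithmetic schemes of Geisser-Schmidt (their Theorem A, in the formulation with Bloch's complex), which provides a perfect pairing of finite abelian groups between $\widehat{H}_c^i(X_T, \mathscr{F})$ and $\Ext^{j}_{X_T}(\mathscr{F}, \Z^c_{X_T}(-1))$ with values in $\Q/\Z$. Composing with the isomorphism from (a) yields the desired duality. The chief technical obstacle is purely bookkeeping: the degree shifts and Tate twists in step (a) must be tracked carefully so that the resulting pairing agrees (on the nose, and not merely up to an automorphism of $\Q/\Z$) with the one obtained from \cite[Theorem A]{GS}, and one must check that the isomorphism in (a) is functorial with respect to the natural trace map implicit in the Geisser-Schmidt pairing. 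Granting this compatibility, the argument is otherwise structurally simpler than the local version of Theorem \ref{thm224}.
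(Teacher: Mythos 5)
Your proposal follows essentially the same route as the paper: the paper identifies $H^{(2r+3)-i}(X_T, \mathscr{F}^{\vee}(r+1)) \cong \Ext^{2-i}_{X_T}(\mathscr{F}, \Z^c_{X_T}(0))$ via Step 1 of \cite[p.~2039]{GS} (the already-global statement, so with dualizing object $\Z^c_{X_T}(0)$ rather than your $\Z^c_{X_T}(-1)$, which is the local convention) and then applies \cite[Theorem 4.6]{GS}, checking only that $X_T \rightarrow B$ is a separated morphism of finite type. The twist-and-degree bookkeeping you defer is exactly what these citations supply, and otherwise your argument coincides with the paper's.
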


\begin{proof}
By Step 1 in \cite[p.2039]{GS}, for all $i \in \Z$, 
$$ 
H^{(2r+3)-i}(X_T, \mathscr{F}^{\vee} (r+1)) \cong \Ext^{2-i}_{X_T}(\mathscr{F}, \Z^c_{X_T}(0)). 
$$
(Note that there is a typo in \cite[p.2039, Step 1]{GS}: each of $\Z^c_{\mathscr{X}}(0)$ should be changed into $\Z^c_{\mathscr{X_{\mathscr{S}}}}(0)$.)
Now the theorem follows from \cite[Theorem 4.6]{GS} since $g_T \circ f_T = f \circ g_T' : X_T \rightarrow B$ is a separated morphism of finite type.  
\end{proof}

\begin{remark} \label{rmk226}
The theorem above is a generalization of \cite[Corollary 2.7.7]{ADT} to the non-smooth case. 
\end{remark}

By Theorem \ref{thm225}, 
$$
\widehat{H}^{2r+3}_c(X_T, \Z/n\Z) \cong H^0(X_T, \Z/n\Z^{\vee}(r+1))^D
$$
$$
= \Hom (\Z/n\Z, \mu_{n, X_T}(X_T)^{\otimes (r+1)})^D = (\mu_{n, X_T}(X_T)^D)^{\otimes (r+1)} \cong \Z/n\Z.
$$
Denote the isomorphism $\widehat{H}^{2r+3}_c(X_T, \Z/n\Z) \xrightarrow{\simeq} (\mu_{n, X_T}(X_T)^D)^{\otimes (r+1)}$ by $\inv_T$.


\section{Arithmetic Chern-Simons action for arithmetic schemes} \label{Sec3}

In this section, we define the arithmetic Chern-Simons action for arithmetic schemes and prove the decomposition formula. 


\subsection{Galois and \'etale cohomology} \label{Sub31}

Let $\mathbf{Et}(B_S)$ be the small \'etale site of $B_S$ and $\mathbf{FSet}_{\pi_S}$ be the category of finite continuous $\pi_S$-sets with a natural Grothendieck topology. 
Let $\mathbf{FEt}(B_S)$ be a full subcategory of the category of $B_S$-schemes whose objects are finite \'etale morphisms to $B_S$, with a natural Grothendieck topology. 
Then there is an equivalence of categories $\mathbf{FEt}(B_S) \rightarrow \mathbf{FSet}_{\pi_S}$ and there is a natural morphism of sites $f : \mathbf{Et}(B_S) \rightarrow \mathbf{FEt}(B_S)$ given by a functor $\mathbf{FEt}(B_S) \rightarrow \mathbf{Et}(B_S)$. Also $f$ induces a morphism of complexes of sheaves
$$
\alpha_* : \mathscr{D}^b(\Sh(\mathbf{Et}(B_S))) \rightarrow \mathscr{D}^b(\Sh(\mathbf{FEt}(B_S))).
$$
For $\mathscr{F}^{\bullet} \in \mathscr{D}^b(\Sh(\mathbf{Et}(B_S)))$, denote the corresponding element of $\alpha_* \mathscr{F}^{\bullet} \in \mathscr{D}^b(\Sh(\mathbf{FEt}(B_S)))$ in $\mathscr{D}^b (\mathbf{Mod}_{\pi_S})$ ($\mathbf{Mod}_{\pi_S}$ is the category of $\pi_S$-modules) also by $\alpha_* \mathscr{F}^{\bullet}$. Then there is a natural map
$$
j^i : H^i(\pi_S, \alpha_* \mathscr{F}^{\bullet}) \cong H^i(\mathbf{FEt}(B_S), \alpha_* \mathscr{F}^{\bullet}) \rightarrow H^i(\mathbf{Et}(B_S), \mathscr{F}^{\bullet}) = H^i(B_S, \mathscr{F}^{\bullet})
$$
and
$$
j^i_c : H^i_c(\pi_S, \alpha_* \mathscr{F}^{\bullet}) \rightarrow \widehat{H}^i_c(B_S, \mathscr{F}^{\bullet})
$$
(see Remark \ref{rmk223} for notations). In the next subsection, we will use the map 
$$
j^{2r+3}_c : H^{2r+3}_c(\pi, \alpha_* Rf_! \Z/n \Z) \rightarrow H^{2r+3}_c(B, Rf_! \Z/n \Z) = \widehat{H}^{2r+3}_c(X, \Z/n \Z).
$$
to define the arithmetic Chern-Simons action for arithmetic schemes. \\

We constructed a map connecting Galois and \'etale cohomology above. We also need maps between Galois cohomology groups and between \'etale cohomology groups. Define 
\begin{center}
$\mathscr{M}_{\widetilde{S}} := (\alpha_* Rf_{S!} \Z/n\Z_{X_S})^{\Gal(F_{\text{un}}^S / F_{\text{un}}^{\widetilde{S}})}$, \\ 
$\mathscr{M} := \mathscr{M}_{\phi} = (\alpha_* Rf_! \Z/n\Z_{X})^{\varpi}$, \\
$\mathscr{G}_S := Rf_{S!} \Z/n\Z_{X_S}$ and $\mathscr{G} := \mathscr{G}_{S_{\infty}} = Rf_! \Z/n\Z_{X}$. 
\end{center}

\begin{lemma} \label{lem311}
For $\widetilde{S_1} \subset \widetilde{S_2}$ and an open immersion $j : B_{S_2} \rightarrow B_{S_1}$, 
\begin{center}
$\mathscr{G}_{S_2} \cong j^* \mathscr{G}_{S_1}$ and
$\mathscr{M}_{\widetilde{S_2}}^{\Gal(F_{\text{un}}^{\widetilde{S_2}} / F_{\text{un}}^{\widetilde{S_1}})} \cong \mathscr{M}_{\widetilde{S_1}}$.
\end{center}
\end{lemma}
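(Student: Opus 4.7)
\medskip\noindent\textbf{Proof plan.} The lemma splits into two independent claims. The first is a geometric base-change statement for $Rf_{!}$, while the second is a purely algebraic identity expressing transitivity of Galois invariants. My plan is to handle them separately.

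For the first claim I will invoke the base-change isomorphism for $Rf_{!}$ applied to the Cartesian square
\[
\begin{tikzcd}
X_{S_2} \arrow[r, "f_{S_2}"] \arrow[d, "j'"] & B_{S_2} \arrow[d, "j"] \\
X_{S_1} \arrow[r, "f_{S_1}"] & B_{S_1}
\end{tikzcd}
\]
obtained by pulling $j$ back along $f_{S_1}$. The identification $X_{S_2} \cong X_{S_1} \times_{B_{S_1}} B_{S_2}$ follows at once from the tower $X \times_B B_{S_2} = (X \times_B B_{S_1}) \times_{B_{S_1}} B_{S_2}$, and $j'$ is again an open immersion since open immersions are preserved under base change. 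Because $f_{S_1}$ is separated of finite type, the base-change isomorphism $j^{*} R(f_{S_1})_{!} \xrightarrow{\sim} R(f_{S_2})_{!} (j')^{*}$ applies, and combined with the tautology $(j')^{*} \Z/n\Z_{X_{S_1}} = \Z/n\Z_{X_{S_2}}$ it yields $j^{*} \mathscr{G}_{S_1} \cong \mathscr{G}_{S_2}$.

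For the second claim the plan is to reduce to transitivity of fixed points. Fix an ambient set $S \supseteq \widetilde{S_2}$ so that both $\mathscr{M}_{\widetilde{S_i}}$ are computed from the same complex of $\pi_{S}$-modules $M := \alpha_{*} R(f_{S})_{!} \Z/n\Z_{X_{S}}$, and set $K_i := \Gal(F_{\un}^{S} / F_{\un}^{\widetilde{S_i}}) \subseteq \pi_{S}$ for $i = 1, 2$. Since each $F_{\un}^{\widetilde{S_i}}/F$ is Galois, each $K_i$ is normal in $\pi_{S}$, and in particular $K_{2} \triangleleft K_{1}$ with quotient canonically $\Gal(F_{\un}^{\widetilde{S_2}}/F_{\un}^{\widetilde{S_1}})$. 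By definition $\mathscr{M}_{\widetilde{S_i}} = M^{K_i}$, and the assertion reduces to the termwise identity $(M^{K_2})^{K_1/K_2} = M^{K_1}$.

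I expect no serious obstacle: the only nontrivial input is the base-change isomorphism for $Rf_{!}$, which is standard once $f_{S_1}$ is known to be separated of finite type (recorded in Section \ref{Sub21}). The subtlest bookkeeping point is the implicit independence of $\mathscr{M}_{\widetilde{S}}$ from the choice of ambient $S$; this is itself a shadow of the first part of the lemma, since under the equivalence $\mathbf{FEt}(B_S) \simeq \mathbf{FSet}_{\pi_S}$ the étale pullback along an open immersion $B_{S_2} \hookrightarrow B_{S_1}$ corresponds to taking further invariants under the kernel of the associated quotient $\pi_{S_2} \twoheadrightarrow \pi_{S_1}$ of fundamental groups.
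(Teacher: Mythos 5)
Your proposal is correct and follows essentially the same route as the paper: the first claim via the base-change isomorphism for $Rf_{!}$ applied to the Cartesian square, and the second via transitivity of Galois invariants. The ``bookkeeping point'' you defer---that $\mathscr{M}_{\widetilde{S_1}}$ may be computed from the larger ambient complex rather than from its own $\mathscr{G}_{S_1}$ (so it is not literally ``by definition'')---is precisely the one step the paper spells out, namely the isomorphism $(\alpha_* j^* \mathscr{G}_{S_1})^{\Gal(F_{\un}^{S_2}/F_{\un}^{S_1})} \cong \alpha_* \mathscr{G}_{S_1}$, justified exactly as you sketch by the fact that $j$ is \'etale, so sections of $j^*\mathscr{G}_{S_1}$ over \'etale $B_{S_2}$-schemes agree with those of $\mathscr{G}_{S_1}$.
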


\begin{proof}
\[
\begin{tikzcd}
X_{S_2} \arrow[r, "f_{S_2}"] \arrow[d, "j'"] & B_{S_2} \arrow[d, "j"]  \\
X_{S_1} \arrow[r, "f_{S_1}"] & B_{S_1}
\end{tikzcd}
\]
Consider the following diagram. By \cite[Theorem 7.4.4(i)]{LF}, 
$$
\mathscr{G}_{S_2} = Rf_{S_2 !} \Z/n\Z_{X_{S_2}} = Rf_{S_2 !} j'^* \Z/n\Z_{X_{S_1}} \cong j^* Rf_{S_1 !}\Z/n\Z_{X_{S_1}} = j^* \mathscr{G}_{S_1}
$$
and
\begin{equation*}
\begin{split}
\mathscr{M}_{\widetilde{S_2}}^{\Gal(F_{\text{un}}^{\widetilde{S_2}} / F_{\text{un}}^{\widetilde{S_1}})} 
& = ((\alpha_* \mathscr{G}_{S_2})^{\Gal(F_{\text{un}}^{S_2} / F_{\text{un}}^{\widetilde{S_2}})})^{\Gal(F_{\text{un}}^{\widetilde{S_2}} / F_{\text{un}}^{\widetilde{S_1}})}  \\
& \cong ((\alpha_* j^* \mathscr{G}_{S_1})^{\Gal(F_{\text{un}}^{S_2} / F_{\text{un}}^{S_1})})^{\Gal(F_{\text{un}}^{S_1} / F_{\text{un}}^{\widetilde{S_1}})}  \\
& \cong (\alpha_* \mathscr{G}_{S_1})^{\Gal(F_{\text{un}}^{S_1} / F_{\text{un}}^{\widetilde{S_1}})}
= \mathscr{M}_{\widetilde{S_1}}.
\end{split}
\end{equation*}
The isomorphism $(\alpha_* j^* \mathscr{G}_{S_1})^{\Gal(F_{\text{un}}^{S_2} / F_{\text{un}}^{S_1})} \cong \alpha_* \mathscr{G}_{S_1}$ is true because $j$ is \'etale so the sheaf $j^* \mathscr{G}_{S_1}$ is defined by $j^* \mathscr{G}_{S_1} (Y) = \mathscr{G}_{S_1} (Y)$ for an \'etale scheme $Y$ over $B_{S_1}$.
\end{proof}

Let $\mathscr{M}_{\widetilde{S}, v} := i^*_{v, \widetilde{S}} \mathscr{M}_{\widetilde{S}}$ be a $\pi_v$-module. For any $\widetilde{S_1} \subset \widetilde{S_2}$ and a projection $\kappa_{\widetilde{S_1}, \widetilde{S_2}} : \pi_{\widetilde{S_2}} \rightarrow \pi_{\widetilde{S_1}}$, there is a canonical injection
$$
\mathscr{M}_{\widetilde{S_1}, v} 
= i^{*}_{v, \widetilde{S_1}}(\mathscr{M}_{\widetilde{S_2}}^{\Gal(F_{\text{un}}^{\widetilde{S_2}} / F_{\text{un}}^{\widetilde{S_1}})} )
= i^{*}_{v, \widetilde{S_2}} \kappa^{*}_{\widetilde{S_1}, \widetilde{S_2}}(\mathscr{M}_{\widetilde{S_2}}^{\Gal(F_{\text{un}}^{\widetilde{S_2}} / F_{\text{un}}^{\widetilde{S_1}})} )
\rightarrow i^{*}_{v, \widetilde{S_2}} \mathscr{M}_{\widetilde{S_2}}
= \mathscr{M}_{\widetilde{S_2}, v} .
$$
Denote the $\pi_v/I_v$-module $(\mathscr{M}_{\widetilde{S}, v})^{I_v}$ by $\mathscr{N}_{\widetilde{S}, v}$. 
Then $H^q(\mathscr{N}_{\widetilde{S}, v})=0$ for any $q>2r$ by \cite[Theorem 7.4.5]{LF} and $H^2(\pi_v/I_v, A)=H^3(\pi_v/I_v, A)=0$ for any $n$-torsion, finite $\pi_v/I_v$-module $A$ by \cite[Proposition 2.18]{GC}. Combining these two facts, we obtain 
$$
H^{2r+2}(\pi_v/I_v, \mathscr{N}_{\widetilde{S}, v})=H^{2r+3}(\pi_v/I_v, \mathscr{N}_{\widetilde{S}, v})=0.
$$
Following the argument of \cite[Section 3 (2)]{LP}, for $\widetilde{S_1} \subset \widetilde{S_2}$, we get a canonical map
$$
\beta^{\Gal}_{\widetilde{S_1}, \widetilde{S_2}} : 
H^{2r+3}_c(\pi_{\widetilde{S_1}}, \mathscr{M}_{\widetilde{S_1}}) \rightarrow
H^{2r+3}_c(\pi_{\widetilde{S_2}}, \mathscr{M}_{\widetilde{S_2}}). 
$$
Similarly, for $S_1 \subset S_2$, there is a canonical map
$$
\beta^{\text{\'et}}_{S_1, S_2} : 
\widehat{H}^{2r+3}_c(X_{S_1}, \Z/n\Z) = \widehat{H}^{2r+3}_c(B_{S_1}, \mathscr{G}_{S_1}) \rightarrow \widehat{H}^{2r+3}_c(B_{S_2}, \mathscr{G}_{S_2}) = \widehat{H}^{2r+3}_c(X_{S_2}, \Z/n\Z).
$$


\subsection{Arithmetic Chern-Simons action without boundary} \label{Sub32}

In the process of generalizing the arithmetic Chern-Simons action without boundary, two problems emerge. First, it is unclear how to give an element of $H^{2r+3}_c(\pi, \alpha_* Rf_! \Z/n \Z)$ by appropriate $\rho$ and $c \in H^{2r+3}(A, \Z/n\Z)$. To avoid this problem, we define the arithmetic Chern-Simons action by a function defined on $H^{2r+3}_c(\pi, \alpha_* Rf_! \Z/n \Z)$. This is a limitation of our paper. Our generalization is not an actual arithmetic Chern-Simons theory for arithmetic schemes. It is just a generalization of cohomological construction of the arithmetic Chern-Simons theory. 

Secondly, generalized Artin-Verdier duality cannot be applied to the scheme $X$. 
This problem can be overcome by composing a map 
$$
\beta^{\text{\'et}}_{S_{\infty}, T_0} : \widehat{H}^{2r+3}_c(X, \Z/n \Z) \rightarrow \widehat{H}^{2r+3}_c(X_{T_0}, \Z/n \Z)
$$ 
with the map 
$$
\inv_{T_0} : \widehat{H}^{2r+3}_c(X_{T_0}, \Z/n \Z) \rightarrow (\mu_{n, X_{T_0}}(X_{T_0})^D)^{\otimes (r+1)}.
$$

Let $\rho \in H^{2r+3}(\pi^{\un}, \mathscr{M})$ and $j^{2r+3}_{\un}$ be the map defined by the composition
$$
j^{2r+3}_{\un} : H^{2r+3}(\pi^{\un}, \mathscr{M}) 
\xrightarrow{\beta^{\Gal}_{\phi, S_{\infty}}} 
H^{2r+3}_c(\pi, \mathscr{M}_{S_{\infty}}) \xrightarrow{j^{2r+3}_c} 
\widehat{H}^{2r+3}_c(X, \Z/n \Z).
$$
Define the \textbf{arithmetic Chern-Simons action} by a function
\begin{equation}
CS : H^{2r+3}(\pi^{\un}, \mathscr{M}) \rightarrow (\mu_{n, X_{T_0}}(X_{T_0})^D)^{\otimes (r+1)} \,\,\, (\rho \mapsto (\inv_{T_0} \circ \beta^{\text{\'et}}_{S_{\infty}, T_0} \circ j^{2r+3}_{\un})(\rho) ).
\end{equation}
By the assumption $\mu_{n, X}(X) \cong \Z/n\Z$, $(\mu_{n, X_{T}}(X_{T})^D)^{\otimes (r+1)} \cong \frac{1}{n}\Z/\Z$ as abelian groups. To give compatible isomorphisms $(\mu_{n, X_{T}}(X_{T})^D)^{\otimes (r+1)} \cong \frac{1}{n}\Z/\Z$ for each $T \supset T_0$, we need the following lemma. 

\begin{lemma}
For any $T \supset T_0$, the map $\beta^{\text{\'et}}_{T_0, T}$ is an isomorphism. 
\end{lemma}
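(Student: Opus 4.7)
The plan is to realise $\beta^{\text{\'et}}_{T_0,T}$ as the inverse of the natural localisation map
\[
\widehat{H}^{2r+3}_c(X_T, \Z/n\Z) \to \widehat{H}^{2r+3}_c(X_{T_0}, \Z/n\Z)
\]
coming from the open immersion $X_T \hookrightarrow X_{T_0}$, and then to show that this localisation map is an isomorphism by a dimension/vanishing argument on the closed complement.

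First, I would write $j : B_T \hookrightarrow B_{T_0}$ for the open immersion and $i : Z = \bigsqcup_{v \in T_f \setminus T_{0f}} \Spec k(v) \hookrightarrow B_{T_0}$ for the closed complement. Using $\mathscr{G}_T \cong j^* \mathscr{G}_{T_0}$ from Lemma \ref{lem311}, the distinguished triangle $j_! \mathscr{G}_T \to \mathscr{G}_{T_0} \to i_* i^* \mathscr{G}_{T_0}$ on $B_{T_0}$ is pushed forward to $B$ along $g_{T_0}$ to give a long exact sequence
\[
\cdots \to \widehat{H}^{i}_c(X_T, \Z/n\Z) \to \widehat{H}^{i}_c(X_{T_0}, \Z/n\Z) \to \bigoplus_{v \in T_f \setminus T_{0f}} H^{i}_c(X_{(v)}, \Z/n\Z) \to \cdots,
\]
where $X_{(v)} := X \times_B \Spec k(v)$ is the special fibre of $f$ at $v$ (proper base change identifies the contribution at $v$, and the third group carries no hat since $Z$ has no archimedean places).

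Second, each $X_{(v)}$ is a scheme of finite type of dimension $r$ over the finite field $k(v)$ (with $n \in k(v)^{\times}$ because $v \notin T_{0f}$), so the Artin--Grothendieck bound gives $H^i_c(X_{(v)}, \Z/n\Z) = 0$ for all $i \geq 2r+2$. Plugging $i = 2r+2$ and $i = 2r+3$ into the long exact sequence above therefore yields the isomorphism
\[
\widehat{H}^{2r+3}_c(X_T, \Z/n\Z) \xrightarrow{\sim} \widehat{H}^{2r+3}_c(X_{T_0}, \Z/n\Z).
\]

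Finally, I would verify that $\beta^{\text{\'et}}_{T_0,T}$, constructed following \cite[Section~3 (2)]{LP} in analogy with $\beta^{\Gal}$, is exactly the inverse of this localisation isomorphism; this compatibility is the main technical obstacle, since it requires unwinding the derived-category conventions of the construction and matching them against the connecting maps above. As an independent sanity check, I would invoke Theorem~\ref{thm225}: generalized Artin--Verdier duality identifies both $\widehat{H}^{2r+3}_c(X_{T_0}, \Z/n\Z)$ and $\widehat{H}^{2r+3}_c(X_T, \Z/n\Z)$ with $(\mu_{n, X_S}(X_S)^D)^{\otimes(r+1)}$, and under the standing hypothesis $\mu_{n, X}(X) \cong \Z/n\Z$ the restriction $\mu_{n, X_{T_0}}(X_{T_0}) \to \mu_{n, X_T}(X_T)$ is an isomorphism, so functoriality of the duality pairing forces $\beta^{\text{\'et}}_{T_0, T}$ to be an isomorphism as well.
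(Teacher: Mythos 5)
Your closing ``sanity check'' is in fact the paper's entire proof: the paper invokes Theorem~\ref{thm225} to get a commutative diagram of perfect pairings of $\widehat{H}^{2r+3}_c(X_{T_0},\Z/n\Z)$ with $H^0(X_{T_0},\mathscr{F}^{\vee}(r+1))$ and of $\widehat{H}^{2r+3}_c(X_{T},\Z/n\Z)$ with $H^0(X_{T},\mathscr{F}^{\vee}(r+1))$, observes that the dual map $\gamma\colon (\mu_{n,X_{T_0}}(X_{T_0}))^{\otimes(r+1)}\to(\mu_{n,X_T}(X_T))^{\otimes(r+1)}$ induced by the open immersion is an isomorphism (both sides are $\mu_{n,X}(X)^{\otimes(r+1)}\cong\Z/n\Z$), and concludes by perfectness of the two pairings --- so that part of your proposal is complete at the paper's level of rigor. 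Your primary route is genuinely different: excision for $j\colon B_T\hookrightarrow B_{T_0}$ with closed complement the points $v\in T_f\setminus T_{0f}$, base change for $Rf_!$ to identify the third term of the long exact sequence with $H^i_c$ of the fibres $X_{(v)}$ over the finite residue fields, and the cohomological-dimension bound giving $H^{2r+2}_c(X_{(v)},\Z/n\Z)=H^{2r+3}_c(X_{(v)},\Z/n\Z)=0$, whence the natural map $\widehat{H}^{2r+3}_c(X_T,\Z/n\Z)\to\widehat{H}^{2r+3}_c(X_{T_0},\Z/n\Z)$ is an isomorphism. This is sound as far as it goes, avoids duality altogether, and is morally the mechanism that makes $\beta^{\text{\'et}}$ definable in top degree in the first place. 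Its one weak point is exactly the step you flag: identifying $\beta^{\text{\'et}}_{T_0,T}$ --- whose construction the paper only sketches, ``similarly'' to $\beta^{\Gal}$ following \cite[Section 3 (2)]{LP} and Lemma~\ref{lem311} --- with the inverse of that extension-by-zero map. If $\beta^{\text{\'et}}$ is built from precisely this triangle and this vanishing, the identification is true by construction and your argument closes; as written, however, it is an unverified compatibility for a map you did not construct. Note that the paper's duality proof carries a parallel (also unproved) burden, namely the commutativity of its pairing diagram with respect to $\beta^{\text{\'et}}_{T_0,T}$ and $\gamma$, so neither route is fully airtight without pinning down the definition of $\beta^{\text{\'et}}$; your fallback coincides with the paper, and your main argument is an acceptable, arguably more self-contained, alternative once that compatibility is supplied.
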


\begin{proof}
By Theorem \ref{thm225}, the following diagram commutes. 
\[
\begin{tikzcd}
\widehat{H}^{2r+3}_c(X_{T_0}, \Z/n \Z) \arrow[d, "\beta^{\text{\'et}}_{T_0, T}"] & \times &
H^0(X_{T_0}, \mathscr{F}^{\vee} (r+1)) \arrow[d, "\gamma"] \arrow[r] & \Q/\Z \arrow[d, equal] \\
\widehat{H}^{2r+3}_c(X_{T}, \Z/n \Z)  & \times &
H^0(X_{T}, \mathscr{F}^{\vee} (r+1)) \arrow[r] & \Q/\Z
\end{tikzcd}
\]
The map
$$
\gamma : (\mu_{n, X_{T_0}}(X_{T_0}))^{\otimes (r+1)} \rightarrow (\mu_{n, X_{T}}(X_{T}))^{\otimes (r+1)}
$$
is induced by an open immersion $X_T \rightarrow X_{T_0}$, so it is an isomorphism. Since each row is a perfect pairing, $\beta^{\text{\'et}}_{T_0, T}$ is also an isomorphism. 
\end{proof}

Fix an isomorphism $(\mu_{n, X_{T}}(X_{T})^D)^{\otimes (r+1)} \cong \frac{1}{n}\Z/\Z$ for each $T \supset T_0$ such that the following diagram commutes. 
\[
\begin{tikzcd}
\widehat{H}^{2r+3}_c(X_{T_0}, \Z/n \Z) \arrow[d, "\inv_{T_0}", "\simeq"'] \arrow[rr, "\beta^{\text{\'et}}_{T_0, T}", "\simeq"'] & & \widehat{H}^{2r+3}_c(X_T, \Z/n \Z) \arrow[d, "\inv_T", "\simeq"'] \\
(\mu_{n, X_{T_0}}(X_{T_0})^D)^{\otimes (r+1)} \arrow[r, "\simeq"] & \frac{1}{n}\Z/\Z  & (\mu_{n, X_T}(X_T)^D)^{\otimes (r+1)} \arrow[l, "\simeq"']
\end{tikzcd}
\]
Now identify $(\mu_{n, X_{T}}(X_{T})^D)^{\otimes (r+1)}$ with $\frac{1}{n}\Z/\Z$ for each $T \supset T_0$ as above and consider the map $CS$ as the map from $H^{2r+3}(\pi^{\un}, \mathscr{M})$ to $\frac{1}{n}\Z/\Z$.


\subsection{Boundary} \label{Sub33}

This subsection serves as a preparation for the proof of the decomposition formula. Actually this subsection corresponds to the Section 2.4 of \cite{LP} about arithmetic Chern-Simons with boundary, but we do not define the arithmetic Chern-Simons action with boundary here.

Let $H_T^i := \prod_{v \in T} H^i(\pi_v, \mathscr{M}_{T, v})$. Then for a base change $f_v : X_v \rightarrow B_v$ $(v \in T)$ of $f$ and a morphism
$$
\alpha_{v*} : \mathscr{D}^b(\Sh(\mathbf{Et}(B_v))) \rightarrow \mathscr{D}^b(\mathbf{Mod}_{\pi_v}),
$$
$$
\mathscr{M}_{T, v}
= i^{*}_{v, T} \alpha_* Rf_{T!} \Z/n\Z
= \alpha_{v *} i{'}^*_{v,T} Rf_{T!} \Z/n\Z
= \alpha_{v *} Rf_{v!} \Z/n\Z_v
$$
so
$$
H_T^{2r+2} = \prod_{v \in T} \widehat{H}^{2r+2}(B_v, Rf_{v !} \Z/n\Z_v) = \prod_{v \in T} \widehat{H}_c^{2r+2}(X_v, \Z/n\Z_v) = P^{2r+2}_c(X_T, \Z/n\Z). 
$$
By Theorem \ref{thm223}, there exists an exact sequence
$$
H^{2r+2}_c(X_T, \Z/n\Z) \rightarrow P^{2r+2}_c(X_T, \Z/n\Z) \rightarrow H^0(X_T, \Z/n\Z^{\vee}(r+1))^D \rightarrow 0.
$$
Denote the map
$$
H_T^{2r+2} = P^{2r+2}_c(X_T, \Z/n\Z) \rightarrow H^0(X_T, \Z/n\Z^{\vee}(r+1))^D = (\mu_{n, X_T}(X_T)^D)^{\otimes (r+1)} \xrightarrow{\simeq} \frac{1}{n}\Z/\Z
$$
by $\sum$. By the same argument as in \cite[Remark 2.6]{LP}, we obtain the relation 
$$
\textstyle{\sum} = \displaystyle \sum_{v \in T} \inv_v
$$
for the maps $\inv_v$ defined in Section \ref{Sub22}.

By \cite[Proposition 8.3.18]{NSW} and \cite[Theorem 10.6.1]{NSW}, $H^3(\pi_{T_f}, A)=0$ for any $n$-torsion $\pi_{T_f}$-module $A$. By \cite[Theorem 7.4.5]{LF},
\begin{center}
$H^q(\mathscr{G}_T) = R^q f_{T !} \Z/n\Z_{X_T} =0$ and 
$H^q((\alpha_* \mathscr{G}_T)^{\varpi_T}) = 0$
\end{center}
for any $q>2r$. Note that 
$$
(\alpha_* \mathscr{G}_T)^{\varpi_T} \in \mathscr{D}^b(\mathbf{Mod}_{\pi_{T_f}}),
$$ 
where $\mathbf{Mod}_{\pi_{T_f}}$ is the category of continuous $\pi_{T_f}$-modules. 
Combining these two facts, we obtain that 
$$
H^{2r+3}(\pi_{T_f}, \mathscr{M}_{T_f}) = H^{2r+3}(\pi_{T_f}, (\alpha_* \mathscr{G}_T)^{\varpi_T}) = 0. 
$$


\subsection{Decomposition formula} \label{Sub34}

In this subsection, we provide a decomposition formula for arithmetic schemes and its proof. We can represent the arithmetic Chern-Simons action by the sum of local invariant as \cite[Theorem 3.1]{LP}. Even though we do not represent the arithmetic Chern-Simons action by the different of two elements of some $\frac{1}{n}\Z/\Z$-torsor, our decomposition formula essentially contains the information of the difference of a local unramified trivialization and a global ramified trivialization. 
The proof is similar to the proof of decomposition formula in \cite{LP}. \\

\noindent \textit{Step 1}. By \cite[Theorem 7.4.5]{LF}, $H^q(\mathscr{M}_{T})=0$ for $q>2r$. For any $n$-torsion finite $\pi_T$-module $A$ and $i \geq 3$, 
$$
H^i(\pi_T, A) \cong \prod_{v \in T}H^i(\pi_v, A) 
$$
by \cite[8.6.10(ii)]{NSW}. Combining these two facts with the definition of cohomology with compact support, we obtain that the top row of the following diagram is exact (cf. \cite[Section 3 (1)]{LP}). 
\begin{equation}\label{diag1}
\begin{tikzcd}
H^{2r+2}(\pi_T, \mathscr{M}_{T}) \arrow[d, "j^{2r+2}"] \arrow[r, "i_T^*"] & H_T^{2r+2} \arrow[d, equal] \arrow[r] & H^{2r+3}_c(\pi_T, \mathscr{M}_{T}) \arrow[r] \arrow[d, "j_c^{2r+3}"] & 0 \\
H_c^{2r+2}(X_T, \Z/n\Z) \arrow[r] & P_c^{2r+2}(X_T, \Z/n\Z) \arrow[r] & \widehat{H}_c^{2r+3}(X_T, \Z/n\Z) \arrow[r] & 0
\end{tikzcd}
\end{equation}
Note that the map $i_T^*$ is given by $H^i(\pi_T, \mathscr{M}_T) \rightarrow H_T^i$ ($\rho \mapsto (\rho \circ i_v)_{v \in T}$) and the composition
$$
P_c^{2r+2}(X_T, \Z/n\Z) \rightarrow \widehat{H}_c^{2r+3}(X_T, \Z/n\Z) \xrightarrow[\simeq]{\inv_T} (\mu_{n, X_T}(X_T)^D)^{\otimes (r+1)}
$$
is identified with the map
$$
P_c^{2r+2}(X_T, \Z/n\Z) \rightarrow H^0(X_T, \Z/n\Z^{\vee}(r+1))^D
$$
given by Theorem \ref{thm223} (see Section \ref{Sub33}). \\

\noindent \textit{Step 2}. Now define the map $\inv'_T$ by
$$
\inv'_T : H^{2r+3}_c(\pi_T, \mathscr{M}_{T}) \xrightarrow{j^{2r+3}_c} \widehat{H}^{2r+3}_c(X_T, \Z/n \Z) \xrightarrow[\simeq]{\inv_T} (\mu_{n, X_T}(X_T)^D)^{\otimes (r+1)} \xrightarrow{\simeq} \frac{1}{n}\Z/\Z.
$$
Then the commutativity of the diagram below follows from the commutativity of the right square of the diagram (\ref{diag1}).
\[
\begin{tikzcd}
H_T^{2r+2} \arrow[d] \arrow[r] & H^{2r+3}_c(\pi_T, \mathscr{M}_{T}) \arrow[d, "\inv'_T"] \\
P_c^{2r+2}(X_T, \Z/n\Z) \arrow[r, "\sum"] & \frac{1}{n}\Z/\Z
\end{tikzcd}
\]
Note that if $\mathscr{M}_{T}$ is a complex of locally constant sheaves, then $j_c^{2r+3}$ (so $\inv'_T$) is an isomorphism by \cite[Proposition 2.2.9]{ADT}. \\

\noindent \textit{Step 3}. We can summarize the results of Step 1, 2 and the maps between cohomology groups constructed in Section \ref{Sub31} by the following commutative diagram.
\[
\begin{tikzcd}[column sep=1.5em]
& {\widehat{H}^{2r+3}_c(X, \Z/n \Z)} \arrow[r]
& {\widehat{H}^{2r+3}_c(X_{T_0}, \Z/n \Z)} \arrow[d, "\simeq"] \arrow[r, "\inv_{T_0}", "\simeq"'] 
& (\mu_{n, X_{T_0}}(X_{T_0})^D)^{\otimes (r+1)} \arrow[d, "\simeq"] \\
{H^{2r+3}(\pi^{\un}, \mathscr{M})} \arrow[d] \arrow[r] \arrow[ru, "j^{2r+3}_{\un}"] \arrow[rrrd, bend left=17, "CS"]
& {H^{2r+3}_c(\pi, \mathscr{M}_{S_{\infty}})} \arrow[d] \arrow[u, "j_c^{2r+3}"'] 
& {\widehat{H}^{2r+3}_c(X_T, \Z/n \Z)} \arrow[r, "\inv_T", "\simeq"'] 
& (\mu_{n, X_T}(X_T)^D)^{\otimes (r+1)} \arrow[d, "\simeq"] \\
{H^{2r+3}_c(\pi_{T_f}, \mathscr{M}_{T_f})} \arrow[r] 
& {H^{2r+3}_c(\pi_T, \mathscr{M}_{T})} \arrow[ru, "j^{2r+3}_c"'] \arrow[rr, "\inv'_T"] 
& 
& \frac{1}{n}\Z/\Z
\end{tikzcd}
\]

\vspace{3mm}

\noindent \textit{Step 4}. Let $\rho = [\omega] \in H^{2r+3}(\pi^{\un}, \mathscr{M})$. By the vanishing of $H^{2r+3}(\pi_{T_f}, \mathscr{M}_{T_f})$, there exists 
$$
b'_+ \in C^{2r+2}(\pi_{T_f}, \mathscr{M}_{T_f})
$$
such that $\kappa_{T_f}^*(\omega) = db'_+$. Denote 
\begin{center}
$b_+  \in C^{2r+2}(\pi_{T}, \mathscr{M}_{T})$ and $b_{+,v} \in C^{2r+2}(\pi_v, \mathscr{M}_{T, v})$ ($v \in T_f$)
\end{center}
where $b_+$ is induced by $p_T$ and $\mathscr{M}_{T_f} \subset \mathscr{M}_{T}$, $b_{+,v}$ is induced by $i_{v, T_f}$ and $\mathscr{M}_{T_f} \subset \mathscr{M}_{T} \rightarrow \mathscr{M}_{T, v}$. Following the argument of \cite[Section 3 (2), (6)]{LP}, we get the following results. \\
$\bullet$ There exists $\widetilde{b_{-,v}} \in C^{2r+2}(\pi_v/I_v, \mathscr{N}_{T, v})$ such that $\kappa_v^*(\omega) \mid_{\pi_v/I_v} = d \widetilde{b_{-,v}}$. \\
$\bullet$ Let $b_{-,v} \in C^{2r+2}(\pi_v, \mathscr{M}_{T, v})$ be the image of $\widetilde{b_{-,v}}$ under the map given by the projection $\pi_v \rightarrow \pi_v/I_v$ and the inclusion $\mathscr{N}_{T, v} \subset \mathscr{M}_{T, v}$. Then the image of $\rho$ in $H^{2r+3}_c(\pi_T, \mathscr{M}_T)$ is given by $[(\kappa_T^*(\omega), (b_{-,v})_{v \in T})]$. (See \cite[Section 2.2]{LP} for notations.) \\

\noindent \textit{Step 5}. Now $db_+ = d(b'_+ \circ p_T) = \kappa_T^*(\omega)$ so
$$
[(\kappa_T^*(\omega), (b_{-,v})_{v \in T})] = [(db_+, (b_{-,v})_{v \in T})] = [(0, (b_{-,v}-b_{+,v})_{v \in T})]
$$
in $H^{2r+3}_c(\pi_T, \mathscr{M}_T)$. Since $b_{+,v}=b_{-,v}=0$ for every $v \in S_{\infty}$, from the diagram in Step 3 we obtain
\begin{equation*}
\begin{split}
CS(\rho) & = \inv'_T([(\kappa_T^*(\omega), (b_{-,v})_{v \in T})])  \\
& = \inv'_T([(0, (b_{-,v}-b_{+,v})_{v \in T})]) \\
& = \sum_{v \in T_f} \inv_v([b_{-,v}-b_{+,v}]). 
\end{split}
\end{equation*}
(Last equality is from the diagram in Step 2 and the relation $\sum = \sum_{v \in T} \inv_v$.)
\begin{theorem}
Let $F$ be a number field, $n \geq 2$ be an integer and $X$ be a regular, flat, separated scheme of finite type of relative dimension $r$ over $B = \Spec \mathcal{O}_F$ such that $\mu_{n, X}(X) \cong \Z/n\Z$. For any finite set $T_f$ of finite primes of $F$ containing all primes dividing $n$ with the notations above, we have the decomposition formula
$$
CS(\rho) := (\inv_{T_0} \circ \beta^{\text{\'et}}_{S_{\infty}, T_0} \circ j^{2r+3}_{\un})(\rho) = \sum_{v \in T_f} \inv_v([b_{-,v}-b_{+,v}]). 
$$
\end{theorem}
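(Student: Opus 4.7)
The plan is to mimic the cocycle-level argument of the decomposition formula in \cite[Theorem 3.1]{LP}, with the Galois cohomology of the number field replaced by the Galois/étale hybrid cohomology adapted to the arithmetic scheme $X$, and with Poitou--Tate and Artin--Verdier duality replaced by their generalizations Theorem~\ref{thm223} and Theorem~\ref{thm225}. Concretely, I would trace a representative cocycle $\omega$ of $\rho$ through a single large commutative diagram connecting $H^{2r+3}(\pi^{\un},\mathscr{M})$ to $\tfrac{1}{n}\Z/\Z$ in two ways: directly, through $j^{2r+3}_{\un}$, $\beta^{\text{\'et}}_{S_\infty,T_0}$ and $\inv_{T_0}$; and indirectly, by first pushing $\rho$ into $H^{2r+3}_c(\pi_T,\mathscr{M}_T)$ and then applying the ``sum of local invariants'' map $\sum_v \inv_v$. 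The theorem will follow from commutativity, provided both paths can be identified with $CS(\rho)$.

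The first step is to assemble this diagram (as in Step~3 of the discussion above). The essential input is a comparison square between Galois and étale cohomology for $B_T$, whose top row is the long exact sequence coming from the definition of compactly supported Galois cohomology together with the high-degree Künneth-type decomposition $H^i(\pi_T,A)\cong\prod_v H^i(\pi_v,A)$ valid for $i\ge 3$ by \cite[8.6.10(ii)]{NSW}, and whose bottom row is the Poitou--Tate sequence from Theorem~\ref{thm223}. Crucially, I would invoke the vanishing $H^q(\mathscr{M}_T)=0$ for $q>2r$ from \cite[Theorem 7.4.5]{LF} to truncate these sequences at the correct spot, and the identification of the right-hand map of the Poitou--Tate sequence, precomposed with the $\inv_T$ isomorphism, with the sum map $\sum_v \inv_v$ (as argued briefly in Section~\ref{Sub33}).

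Next I would perform the explicit cocycle chase of Step~4--5. The vanishing $H^{2r+3}(\pi_{T_f},\mathscr{M}_{T_f})=0$ (proved in Section~\ref{Sub33} from $H^3(\pi_{T_f},A)=0$ for torsion $A$ combined with $H^q(\mathscr{M}_T)=0$ for $q>2r$) lets me choose a global primitive $b'_+\in C^{2r+2}(\pi_{T_f},\mathscr{M}_{T_f})$ with $\kappa_{T_f}^*\omega=db'_+$; pulling this back to the $v$-adic side yields $b_{+,v}$. Likewise, the local vanishing $H^{2r+2}(\pi_v/I_v,\mathscr{N}_{T,v})=H^{2r+3}(\pi_v/I_v,\mathscr{N}_{T,v})=0$ provides unramified local primitives $\widetilde{b_{-,v}}$, and thus $b_{-,v}$. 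The standard compactly-supported cocycle formula (identical to \cite[Section 3 (2), (6)]{LP}) then identifies the image of $\rho$ in $H^{2r+3}_c(\pi_T,\mathscr{M}_T)$ with the class of $(\kappa_T^*\omega,(b_{-,v})_{v\in T})$, which becomes $(0,(b_{-,v}-b_{+,v})_{v\in T})$ after subtracting the coboundary of $(b_+,0)$. Applying $\inv'_T=\sum_v\inv_v$ and noting that $b_{+,v}=b_{-,v}=0$ at archimedean places restricts the sum to $v\in T_f$.

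The main obstacle I anticipate is not the cocycle bookkeeping itself, which is essentially parallel to \cite{LP}, but rather the verification that the various comparison maps ($j^{2r+3}_c$, $\beta^{\text{\'et}}$, $\inv_T$, $\inv'_T$, and the Poitou--Tate sum) fit into the claimed commutative diagram in the \emph{non-smooth} arithmetic-scheme setting. In particular, one must check that the right-hand square of diagram~(\ref{diag1}) commutes with the duality pairing of Theorem~\ref{thm225}, so that $\inv'_T$ defined via $j^{2r+3}_c$ really coincides with $\sum_v\inv_v$; this is where the shift to Bloch's cycle complex approach of \cite{GS} must be invoked carefully. Once this compatibility is secured, the theorem reduces to a purely formal diagram chase.
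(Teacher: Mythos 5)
Your proposal follows essentially the same route as the paper: the same comparison diagram between compactly supported Galois cohomology and the generalized Poitou--Tate sequence (using $H^q(\mathscr{M}_T)=0$ for $q>2r$ and \cite[8.6.10(ii)]{NSW}), the same identification of the Poitou--Tate boundary map composed with $\inv_T$ as $\sum_{v}\inv_v$, and the same cocycle chase producing $b'_+$, $b_{+,v}$, $\widetilde{b_{-,v}}$, $b_{-,v}$ and the class $[(0,(b_{-,v}-b_{+,v})_{v\in T})]$, with the archimedean contributions vanishing. This matches Steps 1--5 of the paper's argument, so the proposal is correct and not a genuinely different proof.
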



\section*{Acknowledgments}

The author would like to thank Jeehoon Park for his encouragement. The author also thanks Alexander Schmidt for answering my questions. 


\vspace{3mm}

\footnotesize{
\textsc{Jungin Lee: Department of Mathematics, Pohang University of Science and Technology, 77 Cheongam-ro, Nam-gu, Pohang, Gyeongbuk, Republic of Korea 37673.} 

\textit{E-mail address}: \changeurlcolor{black}\href{mailto:moleculesum@postech.ac.kr}{moleculesum@postech.ac.kr}


\begin{thebibliography}{99}

\bibitem{BCG}
F. Bleher, T. Chinburg, R. Greenberg, M. Kakde, G. Pappas and M. Taylor, Cup products in the \'etale cohomology of number fields, New York J. Math. 24 (2018), 514-542.

\bibitem{ACST2}
H.-J. Chung, D. Kim, M. Kim, J. Park and H. Yoo, Arithmetic Chern-Simons theory II, preprint, \href{http://arxiv.org/abs/1609.03012}{arXiv:1609.03012}.

\bibitem{CM}
B. Conrad and A. M. Masullo, \'Etale cohomology of algebraic number fields, Available at \\ \url{http://pdfs.semanticscholar.org/9a34/63b8caeb15ef3c0976789edb22e9d093e80e.pdf}.

\bibitem{DW}
R. Dijkgraaf and E. Witten, Topological gauge theories and group cohomology, Comm. Math. Phys. 129 (1990), 393-429.

\bibitem{LF}
L. Fu, \'Etale cohomology theory, Nankai Tracts in Mathematics, vol. 13, World Scientific Publishing Company, Singapore, 2011.

\bibitem{GEI}
T. Geisser, Duality via cycle complexes, Ann. of Math. (2) 172 (2010), 1095-1126. 

\bibitem{GS}
T. Geisser and A. Schmidt, Poitou-Tate duality for arithmetic schemes. Compositio Math. 154 (2018), 2020-2044. 

\bibitem{ACST1}
M. Kim, Arithmetic Chern-Simons theory I, preprint, \href{http://arxiv.org/abs/1510.05818}{arXiv:1510.05818}.

\bibitem{LP}
J. Lee and J. Park, Arithmetic Chern-Simons theory with real places, preprint, \href{http://arxiv.org/abs/1905.13610}{arXiv:1905.13610}.

\bibitem{ADT}
J. S. Milne, Arithmetic duality theorems, 2nd ed., BookSurge, LLC, Charleston, SC, 2006. 

\bibitem{NSW}
J. Neukirch, A. Schmidt and K. Wingberg, Cohomology of number fields, Available at \\
\url{http://www.mathi.uni-heidelberg.de/~schmidt/NSW2e/NSW2.2.pdf}. 

\bibitem{GC}
J.-P. Serre, Galois cohomology, translated from the French by Patrick Ion and revised by the
author, Springer, Berlin, 1997.

\bibitem{ZINK}
T. Zink, \'Etale cohomology and duality in number fields, Haberland, Galois Cohomology, Berlin, 1978, Appendix 2.

\end{thebibliography}
\end{document}